\newtheorem{theorem}{Theorem}[section]
\newtheorem{lemma}[theorem]{Lemma}
\theoremstyle{definition}
\newtheorem{definition}[theorem]{Definition}
\theoremstyle{remark}
\numberwithin{equation}{section}
\begin{document}

\title {Tame hereditary algebras have finitely many m-maximal green sequences}
\author{Kiyoshi Igusa}
\address{Department of Mathematics, Brandeis University, Waltham, Massachusetts 02453}
\curraddr{Department of Mathematics, Brandeis University, Waltham, Massachusetts 02453}
\email{igusa@brandeis.edu}
\author{Ying Zhou}
\address{Department of Mathematics, Brandeis University, Waltham, Massachusetts 02453}
\curraddr{Department of Mathematics, Brandeis University, Waltham, Massachusetts 02453}
\email{yzhou935@brandeis.edu}

\subjclass[2010]{13F60; 16G10}


\keywords{tame valued quiver, silting object, maximal green sequences}

\begin{abstract}
In this paper we state and prove the statement that there are finitely many m-maximal green sequences of tame hereditary algebras.\\
\end{abstract}

\maketitle
\section{Introduction}
\indent Maximal green sequences were invented by Bernhard Keller \cite{Kel11}. In Br\"ustle-Dupont-P\'erotin \cite{BDP13} and the paper by the first author together with Br\"ustle, Hermes and Todorov \cite{BHIT15} it is proven, using representation theory, that there are finitely many maximal green sequences for a valued quiver of finite or tame type or the quiver is mutation equivalent to a quiver of finite or tame types. Furthermore in \cite{BHIT15} it is proven that any tame valued quiver has finitely many $k$-reddening sequences.\\
\indent Since we have maximal green sequences in cluster theory it is reasonable to look at the generalization of this concept in $m$-cluster theory, namely $m$-maximal green sequences. 

We recall that, for $\Lambda$ a finite dimensional hereditary algebra over any field, the indecomposable objects of the bounded derived category $\mathcal D^b(\Lambda)$ of $mod\text-\Lambda$ are $M[k]$ where $M$ is an indecomposable $\Lambda$-module. Such an object is \emph{rigid} if $\text{Ext}^1(M,M)=0$. Two rigid objects $M[i],N[j]$ with $i\le j$ are called \emph{compatible} if either
\begin{enumerate}
\item $i=j$ and $\text{Ext}^1(M,N)=0=\text{Ext}^1(N,M)$ or
\item $i<j$ and $\text{Hom}(N,M)=0=\text{Ext}^1(N,M)$.
\end{enumerate}
An object of $\mathcal D^b(\Lambda)$ is called \emph{pre-silting} if its components are pairwise compatible rigid objects. A pre-silting object is called \emph{silting} if it has the maximum number of nonisomorphic rigid objects which is $n$, the number of simple $\Lambda$-modules.

\begin{definition}
An $m$\textit{-maximal green sequence} is defined as a finite sequence $\{T_i\}$ of silting objects from $T_0=\Lambda$ to $T_N=\Lambda[m]$ such that $T_{i+1}$ is obtained by a forward Iyama-Yoshino mutation from $T_i$. (See \cite{IY06})
\end{definition}
\begin{theorem}\label{thm 1.2}
Any tame hereditary algebra has finitely many $m$-maximal green sequences for any $m\ge1$.
\end{theorem}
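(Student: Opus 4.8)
The plan is to reduce the statement to a bound, depending only on $\Lambda$ and $m$, on the \emph{length} of an $m$-maximal green sequence, and then to extract that bound by slicing such a sequence by homological degree and feeding each slice into the classical ($m=1$) finiteness results. First I would note that every silting object has exactly $n$ indecomposable summands, hence at most $n$ forward Iyama--Yoshino mutations, so the forward--mutation sequences starting at $\Lambda=T_0$ form a tree of branching at most $n$; consequently the number of such sequences of length $\le L$ is at most $\sum_{i=0}^{L}n^{i}<\infty$, and it is enough to bound the length of any $m$-maximal green sequence. One cannot simply invoke König's lemma: the Hasse quiver of silting objects has infinite forward paths (those that drift into the regular part), so the tree pruned at $\Lambda[m]$ is infinite and a genuine length bound is needed. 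A direct $\mathrm{Hom}$-computation, using that $\Lambda$ is hereditary, shows that an indecomposable summand of a silting object in the interval $[\Lambda,\Lambda[m]]$ is a shifted module $M[i]$ with $0\le i\le m-1$ and $M$ arbitrary indecomposable, or $i=m$ with $M$ indecomposable projective; thus only finitely many summands occur in degree $m$, and all potential infinitude is carried by the modules in degrees $0,\dots,m-1$, and among those essentially only by regular modules and by modules lying far out in the (pre)projective or (pre)injective components.

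To get the length bound I would attach to each forward mutation $T_i\to T_{i+1}$ its \emph{brick label}: an indecomposable object $B_i[d_i]$ with $B_i$ a brick of $\mathrm{mod}\,\Lambda$ and $0\le d_i\le m-1$, read off from the exchange triangle (the $m$-cluster analogue of the c-vector of the mutated vertex). Two facts would then finish the argument. First, \emph{no repetition}: the pairs $(B_i,d_i)$ are pairwise distinct, which is the derived analogue of the fact that c-vectors do not repeat along a maximal green sequence and should follow from strict monotonicity in the silting order together with a forward $\mathrm{Hom}$-vanishing argument between successive brick labels. Second, \emph{finitely many labels}: the set $\{(B_i,d_i)\}$ lies in a finite set depending only on $\Lambda$ and $m$. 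For the regular bricks this is built into tameness --- there are only finitely many non-homogeneous tubes, of small ranks $p_1,\dots,p_t$, the homogeneous tubes all contributing the single null root $\delta$ --- so the content is to bound the preprojective and preinjective brick labels. For a fixed degree $d$ I would pass to the subsequence of mutations whose label sits in degree $d$; after silting reduction with respect to the summands in degrees $\ne d$ this becomes a green-type mutation sequence for a tame hereditary algebra, so by \cite{BHIT15} (finitely many maximal green sequences, and finitely many $k$-reddening sequences, for tame valued quivers) its preprojective and preinjective part is uniformly bounded. Summing over $d=0,\dots,m-1$ bounds the number of non-regular labels, and together with the no-repetition property this bounds the length of the sequence.

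The main obstacle is the second half of the previous step: the ``degree-$d$ subsequence'' of an $m$-maximal green sequence is not literally a maximal green sequence of $\Lambda$. The silting reduction through which it is viewed changes along the way, as mutations in the adjacent degrees push summands across degree $d$, and the subsequence need neither begin at $\Lambda$ nor end at $\Lambda[1]$. One must therefore control it by the finiteness of \emph{partial} green sequences and of reddening sequences from \cite{BHIT15}, and verify that the resulting bounds are uniform over the finitely many reduced contexts that can occur --- this is where the tame structure (the null root, the bounded list of regular bricks, and the behaviour of the preprojective and preinjective components under Iyama--Yoshino mutation) does the real work. A lesser obstacle is making the no-repetition property precise in the derived setting, where the c-vector of a forward mutation must be recorded together with its homological shift for the argument to run. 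Once these points are settled, finiteness of $m$-maximal green sequences follows immediately from the reduction in the first step.
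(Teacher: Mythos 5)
Your reduction in the first step is sound and is essentially the same first move the paper makes in a different guise: bounding the length of the sequence is equivalent (given that each silting object admits only finitely many forward mutations) to bounding the set of indecomposable summands that can occur, which is what the paper does. The problem is that the hard part of your argument --- showing that only finitely many preprojective/preinjective brick labels (equivalently, transjective summands) can occur --- is not actually carried out, and the route you sketch for it does not close. The degree-$d$ slice of an $m$-maximal green sequence is not a maximal green sequence, nor a reddening sequence, of any single fixed algebra: the silting reduction you want to perform is taken relative to the summands in degrees $\ne d$, and these change throughout the sequence. To make your ``uniform over the finitely many reduced contexts'' step work you would first need to know that only finitely many configurations of summands in the other degrees occur --- but that is precisely the finiteness statement you are trying to prove, so the argument as proposed is circular. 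You have flagged this as ``the main obstacle,'' but flagging it does not resolve it; as written the proposal is a plan whose decisive step is missing.

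What actually fills this hole in the paper is a direct, quantitative bound on the \emph{transjective degree} of any summand: using (i) that every silting object of a tame hereditary algebra has at least two transjective summands (Lemma \ref{lemma 1}, proved by a tube-by-tube count), and (ii) the Br\"ustle--Dupont--P\'erotin sincerity bound, which forces any two silting-compatible transjective objects to have degrees within $N$ of each other (Lemma \ref{lemma 3.2}), one runs a downward induction along the sequence with the weighted potential function $\sum_j n_j(m-j)N$ to show $\min\deg T\ge -nmN$ for every silting object $T$ in the sequence. This is the analogue, adapted to the $m$-shifted setting, of the BHIT/BDP argument you hoped to quote as a black box; it cannot be invoked off the shelf because, exactly as you observed, the slices are not maximal green sequences. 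If you want to salvage your brick-label framework, you would still need to prove a degree bound of this kind to get finiteness of the label set, at which point the label machinery (and the no-repetition claim, which also remains unproved in the shifted setting) becomes unnecessary.
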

\indent To prove this theorem we only need to prove that only finitely many indecomposable objects can appear as summands of those silting objects that appear in $m$-maximal green sequences of tame hereditary algebras $\Lambda$. It is well-known that all indecomposable objects of $\mathcal D^b(\Lambda)$ are either transjective or regular and there are only finitely many rigid regular objects between $\Lambda$ and $\Lambda[m]$ in $\mathcal D^b(\Lambda)$ for $\Lambda$ tame. Hence the problem is reduced to proving that only finitely many indecomposable transjective objects between $\Lambda$ and $\Lambda[m]$ can appear in $m$-maximal green sequences.\\
\indent To prove this theorem we need two lemmas.

\begin{lemma}\label{lemma 1}
For a tame hereditary algebra $\Lambda$ any silting object in $\mathcal D^b(\Lambda)$ contains at most $n-2$ regular summands. In other words, at least 2 summands have to be transjective. 
\end{lemma}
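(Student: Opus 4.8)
The plan is to localize the problem to a single tube. Recall that for a tame hereditary algebra $\Lambda$ the regular indecomposables of $\mathcal D^b(\Lambda)$ form tubes parametrized by a projective line, all but finitely many of rank $1$ (the homogeneous tubes), the exceptional ones having ranks $p_1,\dots,p_s$; here $(p_1,\dots,p_s)$ is the tubular type of $\Lambda$, and the identity $\sum_{j=1}^{s}(p_j-1)=n-2$ is verified case by case on the extended Dynkin diagrams. A homogeneous tube contains no rigid indecomposable, since every indecomposable in it has a self-extension, so every regular summand of a silting object lies in an exceptional tube. Hence Lemma~\ref{lemma 1} follows once we prove the following claim: for any tame hereditary algebra and any tube $\mathcal T$ of rank $p$ in its bounded derived category, a silting object has at most $p-1$ indecomposable summands of the form $M[a]$ with $M\in\mathcal T$. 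Indeed, granting this, a silting object of $\mathcal D^b(\Lambda)$ has at most $\sum_{j}(p_j-1)=n-2$ regular summands, so at least two of its $n$ summands are transjective.

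I would prove the claim by induction on $p$, uniformly over all tame hereditary algebras. For $p=1$ there is nothing to prove, by the remark above. Suppose $p\ge 2$ and let $T$ be a silting object with a summand $M[a]$, $M\in\mathcal T$. Since $M$ has quasi-length strictly less than $p$, it is a rigid brick, i.e. an exceptional object, so $M[a]$ is a pre-silting object. The key input is silting reduction (Iyama--Yang, within the Iyama--Yoshino mutation framework already invoked in the definition above): the silting objects of $\mathcal D^b(\Lambda)$ having $M[a]$ as a summand correspond bijectively to the silting objects of the Verdier quotient $\mathcal D^b(\Lambda)/\operatorname{thick}(M[a])$, the correspondence sending $T$ to the image $T'$ of its complement $T\ominus M[a]$. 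Because $M$ is exceptional and $\Lambda$ is hereditary, this quotient is triangle equivalent to $\mathcal D^b(\Lambda')$ for a hereditary algebra $\Lambda'$ (built from the perpendicular category $M^{\perp}$) with $n-1$ simples; using the theory of perpendicular categories one checks that $\Lambda'$ is again tame hereditary, that the tube $\mathcal T$ gives rise to a tube $\mathcal T'$ of rank $p-1$ (homogeneous when $p=2$) while all other tubes are unchanged, and that the summands of $T$ lying in $\mathcal T$ other than $M[a]$ correspond exactly to the summands of $T'$ lying in $\mathcal T'$. By the inductive hypothesis applied to $\Lambda'$ and $\mathcal T'$, the silting object $T'$ has at most $p-2$ summands in $\mathcal T'$; hence $T$ has at most $1+(p-2)=p-1$ summands in $\mathcal T$, completing the induction and the claim.

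The step I expect to be the main obstacle is precisely this reduction: identifying $\mathcal D^b(\Lambda)/\operatorname{thick}(M[a])$ with $\mathcal D^b(\Lambda')$ for a tame hereditary $\Lambda'$ with one fewer simple, and tracking that exactly one unit of rank is removed from $\mathcal T$ and from no other tube. This rests on the theory of perpendicular categories of exceptional objects over hereditary algebras and on the stability of tameness under passing to such perpendicular categories; these facts are known but must be assembled and matched with the silting-reduction bijection. An alternative, more self-contained route to the claim is to work inside the tube: the indecomposables of $\mathcal T$ and their shifts are encoded combinatorially by ``arcs'', compatibility becomes a non-crossing condition, and the claim reduces to the statement that a non-crossing family of such arcs has at most $p-1$ members. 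I would expect the delicate point of that approach to be the shift bookkeeping, as illustrated by the fact that the $p$ quasi-simples of a rank-$p$ tube become pairwise compatible after suitable shifts yet cannot all occur as summands of a single silting object -- the shifts forced by compatibility would have to increase around a cycle.
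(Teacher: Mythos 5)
Your overall skeleton matches the paper's: discard homogeneous tubes (no rigid objects there), reduce to the claim that a (pre-)silting object has at most $p-1$ summands in an exceptional tube of rank $p$, and sum $\sum_j(p_j-1)=n-2$ over the tubular type. But your proof of the per-tube bound is genuinely different, and as written it has a gap. You induct via silting reduction and perpendicular categories, asserting that killing $M[a]$ turns $\mathcal T$ into a tube of rank $p-1$ while leaving everything else intact. That is only correct when $M$ is quasi-simple. If $M$ has quasi-length $\ell\ge 2$, the perpendicular category of $M$ reduces the weight $p$ to $p-\ell$ and contributes a separate factor equivalent to $\operatorname{mod}kA_{\ell-1}$; in particular $\Lambda'$ need not be a connected tame hereditary algebra, so your induction hypothesis (``uniformly over all tame hereditary algebras'') does not apply to it as stated. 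The arithmetic can be repaired --- $1+(p-\ell-1)+(\ell-1)=p-1$ --- but you would have to strengthen the inductive statement to cover products of a tame hereditary algebra with representation-finite hereditary algebras, and you would still owe the verification (which you yourself flag as the main obstacle) that the silting-reduction bijection sends summands in $\mathcal T$ to summands in $\mathcal T'\sqcup\operatorname{mod}kA_{\ell-1}$ and nothing else. So the route is plausible but not complete.

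The paper's argument is far more elementary and entirely internal to the tube. It first shows (Lemma \ref{lemma 2.1}(2)) that a cyclic chain $X_1,\dots,X_k$ with $\operatorname{Hom}(X_i,\tau X_{i+1})\neq0$ (indices cyclic) is silting-incompatible, because compatibility of the shifted summands would force $n_1<n_2<\cdots<n_k<n_1$. From this it deduces that a pre-silting object in a tube of rank $s$ cannot be regular sincere, i.e., some quasi-simple $M_j$ is missing from all composition series; hence the pre-silting object lies in a wing, the extension-closed subcategory with simples $M_1,\dots,M_{s-1}$, which is equivalent to $\operatorname{mod}kA_{s-1}$ and therefore admits at most $s-1$ pairwise compatible rigid summands. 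This is essentially the ``non-crossing arcs'' argument you sketch as your alternative route, and the cyclic shift obstruction you correctly identify as the delicate point is exactly what Lemma \ref{lemma 2.1}(2) supplies. If you pursue your write-up, I would recommend that elementary route over the silting-reduction machinery.
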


\begin{lemma}\label{lemma 2}
For a tame hereditary algebra $\Lambda$ there is a uniform bound, depending only on $\Lambda$ and $m$, on the transjective degree of any transjective summand in any silting object in any $m$-maximal green sequence $\mathcal D^b(\Lambda)$.
\end{lemma}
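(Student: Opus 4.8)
The plan is to first reduce the lemma to a boundedness statement for $\mathbf g$-vectors, and then to control how a forward Iyama--Yoshino mutation can move a transjective summand. For the reduction: since a forward mutation strictly decreases the silting partial order, every silting object $T$ in an $m$-maximal green sequence lies in the interval $[\Lambda[m],\Lambda]$. Writing $T=\bigoplus_j T^{(j)}[j]$ with each $T^{(j)}\in\mathrm{mod}\,\Lambda$, the relations $\Lambda\ge T\ge\Lambda[m]$ force every summand into cohomological degree $0\le j\le m$ and force $T^{(m)}$ to be projective (a module $N$ with $\mathrm{Ext}^1(N,\Lambda)=0$ is projective). Hence the cohomological degree of any transjective summand is bounded by $m$, and it remains to bound the position $k$ of the transjective module $M$ in its $\tau$-orbit for each transjective summand $M[j]$, i.e.\ the $k$ with $M\cong\tau^{-k}P_i$ or $M\cong\tau^{k}I_i$. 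Since in the affine case $\underline{\dim}(\tau^{-k}P_i)$ grows linearly in $k$ along the ray spanned by the null root $\delta$, this is equivalent to bounding the $\mathbf g$-vector $C^{-1}\underline{\dim}\,M$ (with $C$ the Cartan matrix) of each transjective summand.

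The main point is then that a silting object carrying a transjective summand of very high position cannot lie on a forward-mutation path from $\Lambda$ to $\Lambda[m]$. Such silting objects occupy ``peripheral rays'' of the silting exchange graph: the preprojective (resp.\ preinjective) slices far from the projectives form an infinite ray along which forward mutation is monotone in the $\tau$-direction, so reaching depth $k$ requires at least $\sim k$ forward mutations, after which one can no longer turn back toward $\Lambda[m]$. To turn this into a uniform (not merely finite-per-sequence) bound I would combine: the $\tau$-invariant defect $\partial=\langle\delta,-\rangle$, which separates the preprojective/regular/preinjective classes; the explicit form of $\mathrm{Hom}$ and $\mathrm{Ext}^1$ between indecomposables lying in a bounded window of the transjective component; and Lemma \ref{lemma 1}, which gives at least two transjective summands, so that a single deep summand cannot ``run off'' on its own --- its compatibility with the other $n-1$ summands, together with the fact that only finitely many regular rigid objects occur between $\Lambda$ and $\Lambda[m]$, pins its neighbours into the same window. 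Cutting the sequence at the finitely many ``regular crossings'' it can make, each intervening stretch becomes a forward-mutation path inside the transjective part of $\mathcal D^b(\Lambda)$, which can be bounded directly.

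The step I expect to be the main obstacle is exactly obtaining the bound \emph{uniformly}, independent of the length of the sequence. No linear functional on $K_0$ detects transjective depth: a deep preprojective has the same defect as its $\tau$-orbit representative among the projectives, and its slope converges to that of the regular modules, so neither the defect nor a generic stability central charge sees it. Thus the quantity to be controlled is genuinely non-linear on $K_0$, and one has to track by hand how a single forward mutation changes the maximal transjective position occurring among the summands of the current silting object, showing that this change is bounded in terms of $\Lambda$ and $m$ while the position must begin and end in a bounded range. This bookkeeping --- in effect an analysis of mutation restricted to the transjective part of the derived category --- is the technical heart of the proof.
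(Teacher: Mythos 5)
You have correctly assembled the two ingredients the paper also uses --- the sincerity statement of Br\"ustle--Dupont--P\'erotin, which yields that any two silting-compatible transjective objects have transjective degrees within a fixed constant $N$ of each other, and Lemma \ref{lemma 1}, which guarantees a second transjective summand to serve as an anchor at each mutation step --- together with the observation that all cohomological degrees lie in $[0,m]$. But the proposal stops exactly where the proof has to start: you yourself flag that the uniform bound, independent of the length of the sequence, is ``the main obstacle,'' and the mechanisms you sketch for it do not close the gap. Knowing that each mutation moves the minimal transjective degree by at most $N$ and that the degree begins and ends in a bounded range proves nothing, since an arbitrarily long green sequence could drift arbitrarily far before returning; and the ``peripheral ray'' picture (that reaching depth $k$ costs $k$ mutations after which one cannot turn back toward $\Lambda[m]$) is not justified outside rank $2$ --- a forward mutation can perfectly well replace a deep preprojective by a shallower object in a higher shift, so transjective depth is not monotone along green paths. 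Likewise ``cutting at regular crossings'' and asserting the transjective stretches ``can be bounded directly'' is an assertion, not an argument.

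What actually closes the gap in the paper is a potential-function (monovariant) argument: letting $n_j$ count the components of $T$ lying in $\mathcal P_j\cup\mathcal R_{j-1}$, one proves by induction on the number of green mutations remaining to reach $\Lambda[m]$ that
\[
\min\deg T\;\ge\;-\sum_{j=0}^m n_j(m-j)N .
\]
The point is that, by Lemma \ref{lemma 1} and the $N$-Lipschitz compatibility bound, the minimal transjective degree can only drop (reading the sequence backwards, i.e.\ under a red mutation) when the mutated component falls to a strictly lower cohomological degree, and each such event is paid for by a decrease of the right-hand side by at least $N$. Since each of the $n$ components can traverse at most $m$ cohomological degrees over the whole sequence, the total budget is $nmN$, giving the uniform bound $\min\deg T\ge -nmN$ (and dually $\max\deg T\le nmN$). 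This exchange of ``transjective drift'' for ``cohomological degree,'' with the latter globally bounded by $nm$, is the idea missing from your outline; without it, or some substitute for it, the proof does not go through.
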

\indent It is easy to see why Lemma \ref{lemma 2} implies the theorem. Here the \textit{transjective degree} of an indecomposable transjective object $\tau^iP_j[k]$ is defined as $deg(\tau^iP_j[k])=i$. The \textit{maximal transjective degree} and \textit{minimal transjective degree} of a silting object are defined as the highest/lowest transjective degree of its indecomposable transjective summands respectively.\\
\indent In Section \ref{sec 2} we prove Lemma \ref{lemma 1}. In Section \ref{sec 3} we prove Lemma \ref{lemma 2}. In Section \ref{sec 4} we further generalize the theorem to arbitrary finite mutation sequences with finitely many forward/green or backward/red mutations.

\section{Lemma \ref{lemma 1}: at least 2 transjective summands}\label{sec 2}
\indent To prove Lemma \ref{lemma 1} we recall that regular components of Auslander-Reiten quivers of tame hereditary algebras are all standard stable tubes with at most three tubes which are nonhomogeneous (see \cite{DR76} and Chapter X of \cite{SS06}). Note that objects in a homogeneous tube are not rigid so cannot appear in a silting object of $\mathcal D^b(\Lambda)$. Hence we only need to discuss the nonhomogeneous tubes.\\
\indent It is clear that two shifts of the same indecomposable object $M$ are not compatible. Hence the regular indecomposable components of any silting object are distinct regular modules in various degrees. We say that a family of rigid indecomposable $\Lambda$-modules $\{M_i\}_{i\in I}$ is \textit{silting-incompatible} if $\bigoplus_{i\in I}M_i[k_i]$ is not pre-silting for any $\{k_i\}_{i\in I}$. Otherwise we say that the family of modules is \textit{silting-compatible}. 

Let $M_i$ be the quasi-simple modules in a tube of size $s$ such that $\tau M_i=M_{i-1}$ where the indices are understood to be modulo $s$. We call a regular module in $D^b(\Lambda)$ \textit{regular sincere} if its composition series contains all regular simples (the quasi-simple modules). Indecomposable regular sincere modules and their shifts can not appear as summands in any silting object because they are not rigid. (See Corollary X.2.7 of \cite{SS06}). The remaining $s(s-1)$ indecomposable objects in the tube are rigid and we can unambiguously label them as $M_{ij}$ if the regular top and regular socle of the object are $M_j$ and $M_i$ respectively. Note that $M_i=M_{ii}$. It is clear that $\tau M_{ij}=M_{i-1,j-1}$ and $\tau^{-1} M_{ij}=M_{i+1,j+1}$ with indices taken modulo $s$.\\
\indent Now let's prove two easy lemmas on what can not appear in a pre-silting object in a regular component of the Auslander-Reiten quiver of $D^b(\Lambda)$.

\begin{lemma}\label{lemma 2.1}
\begin{enumerate}
\item If $M$ and $N$ are regular modules in a nonhomogeneous tube in the Auslander-Reiten quiver of $kQ$. If $\text{Hom}(M,N)\neq 0$ and $\text{Ext}^1(N,M)\neq 0$, then $M$ and $N$ are silting-incompatible.
\item Let $X_1,\cdots, X_k$ be regular modules in a nonhomogeneous tube in the Auslander-Reiten quiver of $\Lambda$. If $\text{\emph{Hom}}(X_i,\tau X_{i+1})\neq 0$ for any $1\leq i<k$  and $\text{\emph{Hom}}(X_k,\tau X_1)\neq 0$, then $\{X_i\}$  is silting-incompatible.
\end{enumerate}
\end{lemma}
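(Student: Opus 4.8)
The plan is to prove both parts directly from the definition of compatibility, the only subtlety being that this definition is asymmetric in the two possible orderings of the shift degrees. The single external ingredient is Auslander-Reiten duality: for modules $X,Y$ over the hereditary algebra one has $\text{Ext}^1(X,Y)\cong D\,\overline{\text{Hom}}(Y,\tau X)$, and since all modules occurring here lie in a stable tube no nonzero homomorphism among them factors through an injective, so $\overline{\text{Hom}}$ may be replaced by $\text{Hom}$. In particular $\text{Hom}(X_i,\tau X_{i+1})\neq 0$ if and only if $\text{Ext}^1(X_{i+1},X_i)\neq 0$, which is how I will feed the hypotheses of part (2) into the compatibility conditions.

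For part (1), I may assume $M$ and $N$ are rigid, since otherwise no shift of $M\oplus N$ is pre-silting and there is nothing to prove. Suppose toward a contradiction that $M[k_1]\oplus N[k_2]$ is pre-silting for some integers $k_1,k_2$; then $M[k_1]$ and $N[k_2]$ must be compatible, and I split into three cases. If $k_1=k_2$, compatibility requires $\text{Ext}^1(M,N)=0=\text{Ext}^1(N,M)$, contradicting $\text{Ext}^1(N,M)\neq 0$. If $k_1<k_2$, then $M[k_1]$ sits in the lower degree and case~(2) of the definition requires $\text{Hom}(N,M)=0=\text{Ext}^1(N,M)$, again contradicting $\text{Ext}^1(N,M)\neq 0$. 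If $k_1>k_2$, then $N[k_2]$ sits in the lower degree and case~(2) requires $\text{Hom}(M,N)=0=\text{Ext}^1(M,N)$, contradicting $\text{Hom}(M,N)\neq 0$. So no choice of $k_1,k_2$ works, i.e. $M$ and $N$ are silting-incompatible.

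For part (2), suppose $\bigoplus_{i=1}^{k}X_i[k_i]$ is pre-silting for some integers $k_1,\dots,k_k$. By Auslander-Reiten duality the hypotheses become $\text{Ext}^1(X_{i+1},X_i)\neq 0$ for $1\leq i<k$ and $\text{Ext}^1(X_1,X_k)\neq 0$. For each $i<k$ the objects $X_i[k_i]$ and $X_{i+1}[k_{i+1}]$ are compatible, and the same case analysis as in part (1), applied to the nonvanishing of $\text{Ext}^1(X_{i+1},X_i)$, rules out $k_i\leq k_{i+1}$: equality would force $\text{Ext}^1(X_{i+1},X_i)=0$, and $k_i<k_{i+1}$ would, via case~(2) with $X_i[k_i]$ in the lower degree, also force $\text{Ext}^1(X_{i+1},X_i)=0$. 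Hence $k_i>k_{i+1}$ for all $i<k$, giving $k_1>k_2>\cdots>k_k$. Applying the same argument to the pair $X_k[k_k]$, $X_1[k_1]$ together with the nonvanishing of $\text{Ext}^1(X_1,X_k)$ yields $k_k>k_1$, which is absurd. Therefore $\{X_i\}$ is silting-incompatible.

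The arguments are essentially bookkeeping with the shift degrees, so I do not expect a genuine obstacle; the two things to watch are applying the correct clause of the compatibility definition depending on which object sits in the lower degree, and getting the variance in Auslander-Reiten duality right, so that the cyclic $\text{Hom}$-hypothesis of part (2) turns into a cyclic $\text{Ext}^1$-hypothesis that traverses the cycle in a consistent direction and hence produces the impossible descending chain of degrees.
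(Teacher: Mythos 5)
Your proof is correct and follows essentially the same route as the paper: a direct case analysis on the shift degrees against the definition of compatibility for part (1), and Auslander--Reiten duality converting the cyclic $\text{Hom}(X_i,\tau X_{i+1})\neq 0$ hypothesis into a cyclic $\text{Ext}^1$ condition that forces an impossible strictly monotone cycle of shifts for part (2). (You obtain the chain as strictly decreasing where the paper writes it as strictly increasing, but either orientation of the cyclic strict chain yields the same contradiction, and your explicit justification that $\overline{\text{Hom}}=\text{Hom}$ in the tube is a welcome extra detail.)
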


\begin{proof}
\indent For (1) since $\text{Hom}(M,N)\neq 0$, $\text{Ext}^{i-j}(M[i],N[j])\neq 0$ if $i>j$. Since $\text{Ext}^1(N,M)\neq 0$ $\text{Ext}^{j-i+1}(N[j],M[i])\neq 0$ if $i\leq j$. Hence $M[i]\oplus N[j]$ is not pre-silting for any $i$ and $j$.\\
\indent For (2) for arbitrary $n_1,\cdots n_k$, by a similar argument we see that, if $\oplus_{i=1}^kX_i[n_i]$ is pre-silting, then $n_1<n_2<\cdots<n_k<n_1$ which is impossible. Hence $\{X_i\}$  is silting-incompatible.
\end{proof}

\begin{lemma}\label{lemma 2.2}
Any pre-silting object in a standard stable tube of size $s$ contains at most $s-1$ summands.
\end{lemma}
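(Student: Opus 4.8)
The plan is to combine Lemma~\ref{lemma 2.1}(2) with a reduction to a linear quiver of type $A_{s-1}$. For $s=1$ the tube is homogeneous and contains no rigid object, so the statement is trivial; assume $s\ge 2$. Call a family $\{X_1,\dots,X_k\}$ of pairwise non-isomorphic rigid indecomposable regular modules in the tube \emph{silting-compatible} if $\bigoplus_t X_t[n_t]$ is pre-silting for some integers $n_1,\dots,n_k$. The indecomposable summands of a pre-silting object $P$ lying in the tube are pairwise compatible rigid modules, and no module can occur in $P$ in two distinct degrees (distinct shifts of one object are never compatible), so the underlying modules of the summands of $P$ form a silting-compatible family with as many members as $P$ has summands. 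It therefore suffices to show that a silting-compatible family in a tube of rank $s$ has at most $s-1$ members.

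The heart of the matter is the claim that the union of the supports (the sets of regular composition factors) of a silting-compatible family $\{X_1,\dots,X_k\}$ is a \emph{proper} subset of the set of quasi-simples $\{M_1,\dots,M_s\}$. Granting this, pick a quasi-simple $M_p$ which is a composition factor of none of the $X_t$, and let $\mathcal W$ be the full subcategory of the tube consisting of the modules all of whose regular composition factors lie among $M_{p+1},\dots,M_{p+s-1}$. Since the tube is standard (so its indecomposables are uniserial), $\mathcal W$ is closed under submodules, quotients and extensions in $\operatorname{mod}\Lambda$, hence is an abelian hereditary length category with the $s-1$ simple objects $M_{p+1},\dots,M_{p+s-1}$; a direct check of $\operatorname{Ext}^1$ between these shows its Ext-quiver is a linear $A_{s-1}$, so $\mathcal W\simeq\operatorname{mod}kA_{s-1}$. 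This equivalence induces a fully faithful triangle functor $\mathcal D^b(kA_{s-1})\to\mathcal D^b(\Lambda)$ whose essential image contains $\bigoplus_t X_t[n_t]$, so the latter is identified with a pre-silting object of $\mathcal D^b(kA_{s-1})$, which has at most $s-1$ indecomposable summands because $kA_{s-1}$ has $s-1$ simples. Hence $k\le s-1$.

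To prove the support claim, suppose for contradiction that the supports, regarded as arcs in the cyclic ordering $M_1,M_2,\dots,M_s$ of the quasi-simples, together cover the whole cycle. Each such arc has length at most $s-1$, since an indecomposable regular module of quasi-length $\ge s$ is regular sincere and hence not rigid. Pass to a minimal sub-collection $X_{i_1},\dots,X_{i_m}$ whose arcs still cover the cycle; then $m\ge 2$. Ordering these arcs cyclically by their starting points, minimality forces that none of them contains another, and --- using that none wraps all the way round --- one checks that for each $r$ modulo $m$ the point immediately preceding the start of the arc of $X_{i_{r+1}}$ lies in the arc of $X_{i_r}$, while the endpoint of the arc of $X_{i_r}$ lies in the $\tau$-shift of the arc of $X_{i_{r+1}}$. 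By the combinatorial description of homomorphisms in a standard stable tube, this says precisely that $\operatorname{Hom}(X_{i_r},\tau X_{i_{r+1}})\ne 0$ for every $r$ modulo $m$. By Lemma~\ref{lemma 2.1}(2) the family $\{X_{i_1},\dots,X_{i_m}\}$ is then silting-incompatible, contradicting the fact that it is a sub-collection of the silting-compatible family $\{X_1,\dots,X_k\}$ (pre-silting objects are closed under direct summands). This proves the claim, and with it the lemma.

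The step I expect to be the main obstacle is the support claim, and within it the purely combinatorial task of extracting from a minimal circular-arc cover a cyclically ordered chain of arcs that realizes the hypothesis $\operatorname{Hom}(X_{i_r},\tau X_{i_{r+1}})\ne 0$ of Lemma~\ref{lemma 2.1}(2), keeping careful track of the fact that no arc wraps around the tube (this is what rules out the various degenerate configurations). The reduction $\mathcal W\simeq\operatorname{mod}kA_{s-1}$ is standard --- $\mathcal W$ is a wing of the tube, see \cite{SS06} --- but one must set it up so that it transports pre-silting objects to pre-silting objects; alternatively one can avoid it entirely by noting that the modules of the tube whose composition factors avoid $M_p$ are exactly the interval representations of $\vec A_{s-1}$, with matching $\operatorname{Hom}$ and $\operatorname{Ext}^1$.
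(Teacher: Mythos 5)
Your proposal is correct and follows essentially the same route as the paper: you first show the regular summands cannot be regular sincere by exhibiting a cyclic chain with $\operatorname{Hom}(X_{i_r},\tau X_{i_{r+1}})\neq 0$ and invoking Lemma~\ref{lemma 2.1}(2), and then reduce to a wing equivalent to $\operatorname{mod}kA_{s-1}$, which forces at most $s-1$ summands. Your minimal-cover combinatorics is simply a more carefully verified version of the paper's chain construction for the sincerity step, so the two arguments coincide in substance.
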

\indent To prove this lemma we need the following lemma.

\begin{lemma}
Any pre-silting object in a standard stable tube of size $s$ can not be regular sincere.
\end{lemma}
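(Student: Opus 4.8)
The plan is to argue by contradiction, combining the morphism combinatorics inside the tube with Lemma \ref{lemma 2.1}(2). Extending the terminology from modules to objects, I call a pre-silting object \emph{regular sincere} when the composition factors of its indecomposable summands jointly include every quasi-simple $M_i$. For a rigid indecomposable $M_{ab}$ in the tube write $\text{supp}(M_{ab})=[a,b]=\{a,a+1,\dots,b\}$ for its \emph{support interval}, read cyclically in $\mathbf{Z}/s$; since $M_{ab}$ is rigid this interval has length at most $s-1$, so it is a proper subset of $\mathbf{Z}/s$. Thus regular sincerity of $T=\bigoplus_t M_{a_tb_t}[k_t]$ says exactly that the intervals $[a_t,b_t]$ cover $\mathbf{Z}/s$. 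Suppose for contradiction that such a $T$ is pre-silting; I may assume $s\ge2$, since a homogeneous tube contains no rigid objects.

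First I would record the $\text{Hom}$ and $\text{Ext}^1$ criteria in a standard stable tube, all of whose indecomposable objects are uniserial. The image of a nonzero map $M_{ab}\to M_{cd}$ is simultaneously a nonzero quotient of $M_{ab}$ (hence of the form $M_{a'b}$) and a nonzero submodule of $M_{cd}$ (hence of the form $M_{cf}$), so it must equal $M_{cb}$; hence $\text{Hom}(M_{ab},M_{cd})\ne0$ iff $c\in[a,b]$ and $b\in[c,d]$. Combining this with $\tau M_{cd}=M_{c-1,d-1}$ and the Auslander-Reiten isomorphism $\text{Ext}^1(M_{cd},M_{ab})\cong D\,\text{Hom}(M_{ab},\tau M_{cd})$ (legitimate here, since regular modules have no injective summands and there is no nonzero map from a preinjective module to a regular one) gives the criterion
\[
\text{Ext}^1(M_{cd},M_{ab})\ne0\iff c\in[a+1,b+1]\ \text{and}\ b+1\in[c,d].
\]

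Then I would run the main construction: a greedy ``marching'' around the circle $\mathbf{Z}/s$. Fix the quasi-simple $M_0$. Since $T$ is regular sincere, $0\in[a_t,b_t]$ for some $t$; I would take $I_1=[a_1,b_1]$ to be such a summand whose clockwise endpoint $b_1$ is as far clockwise from $0$ as possible. As $|I_1|\le s-1$ we have $b_1+1\notin I_1$, so $b_1+1$ lies in the support of some other summand; take $I_2=[a_2,b_2]$ among those with $b_2$ as far clockwise from $b_1+1$ as possible, and continue: $I_{t+1}=[a_{t+1},b_{t+1}]$ is a summand whose support contains $b_t+1$ and which reaches as far clockwise from $b_t+1$ as possible. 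The key claim is that $\text{Ext}^1(M_{a_{t+1}b_{t+1}},M_{a_tb_t})\ne0$ for every $t$. By the displayed criterion, and since $b_t+1\in[a_{t+1},b_{t+1}]$ by construction, it suffices to check $a_{t+1}\in[a_t+1,b_t+1]$; if this failed, then $I_{t+1}$ would contain the whole interval $[a_t,b_t+1]$, hence would contain $b_{t-1}+1$ (which lies in $I_t$ by construction; for $t=1$ use $0\in I_1$ in its place) while reaching strictly further clockwise than $I_t$ does, contradicting the maximality that defined $I_t$. Equivalently, $\text{Hom}(M_{a_tb_t},\tau M_{a_{t+1}b_{t+1}})\ne0$ for all $t$.

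Finally, $T$ has only finitely many summands, so the infinite sequence $(I_t)$ repeats: $I_j=I_{j'}$ for some $j<j'$, and $j'-j\ge2$ because consecutive terms differ ($b_t+1\notin I_t$ while $b_t+1\in I_{t+1}$). Then $M_{a_jb_j},M_{a_{j+1}b_{j+1}},\dots,M_{a_{j'-1}b_{j'-1}}$ is a family $X_1,\dots,X_k$ (with $k=j'-j\ge2$) of regular modules in the tube satisfying $\text{Hom}(X_i,\tau X_{i+1})\ne0$ for $1\le i<k$ and $\text{Hom}(X_k,\tau X_1)\ne0$, so Lemma \ref{lemma 2.1}(2) forces this family to be silting-incompatible, contradicting that $T$ is pre-silting; this establishes the lemma. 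I expect the one genuine obstacle to be the bookkeeping in the marching step: the maximality argument yielding $a_{t+1}\in[a_t+1,b_t+1]$ has to be carried out uniformly in cyclic-interval arithmetic, together with a verification that the degenerate wrap-around configurations cannot arise, which is precisely where rigidity, in the form $|I_t|\le s-1$, enters.
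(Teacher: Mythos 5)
Your proof is correct and follows the same overall strategy as the paper's: produce a cyclic chain of components $X_1,\dots,X_k$ of $T$ with $\text{Hom}(X_i,\tau X_{i+1})\neq 0$ (indices read cyclically) and then invoke Lemma \ref{lemma 2.1}(2). The difference lies in how the next link of the chain is chosen, and your extra care there is not cosmetic. The paper takes for $Y$ an \emph{arbitrary} component whose composition series contains $M_{j+1}$ and asserts $\text{Hom}(M_{ij},\tau Y)\neq 0$ merely because $\tau Y$ has $M_j$ as a composition factor; by your own Hom criterion this is not sufficient --- one also needs the regular socle $M_c$ of $Y=M_{cd}$ to satisfy $c\in[i+1,j+1]$. (For instance, in a tube of rank $4$ with $X=M_{11}$, the module $Y=M_{02}$ contains $M_2=M_{j+1}$, yet $\text{Hom}(M_{11},\tau M_{02})=\text{Hom}(M_{11},M_{31})=0$ since the socle of $M_{31}$ is $M_3$.) Your greedy choice of $I_{t+1}$ as a summand through $b_t+1$ reaching farthest clockwise is exactly what supplies the missing socle condition $a_{t+1}\in[a_t+1,b_t+1]$, via the maximality that defined $I_t$; and rigidity, in the form $|I_t|\le s-1$, enters where you say it does, to guarantee $b_t+1\notin I_t$ so that the march advances and the eventual repeat has period at least $2$. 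In short, your write-up proves the lemma by the paper's method while repairing a genuine imprecision in the published argument.
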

\begin{proof}

\indent Assume that a pre-silting object $T$ in a standard stable tube of size $s$ is regular sincere. For each component $X=M_{ij}$ of $T$, there is another component $Y$ of $T$ having $M_{j+1}$ in its composition series. Then $\tau Y$ has $M_j$ in its composition series. So, $\text{Hom}(X,\tau Y)\neq0$. Continuing in this way, we find a sequence of components of $T$ so that each maps to $\tau$ of the next. This sequence eventually repeats giving a contradiction to Lemma \ref{lemma 2.1}(2). Therefore, $T$ cannot be regular sincere.
\end{proof}
\indent Now we can prove Lemma \ref{lemma 2.2}.
\begin{proof}[Proof of Lemma \ref{lemma 2.2}]
Since any pre-silting object in a standard stable tube of size $s$ can not be regular sincere, without loss of generality it is a pre-silting object in the exact subcategory of $\mathcal{T}$ closed under extensions such that $M_1,\cdots M_{s-1}$ are the only simple objects. This category is equivalent to the module category of $KA_{s-1}$ with linear orientation and as a result any pre-silting object in it has at most $s-1$ summands. 
\end{proof}
\indent Finally we can prove Lemma \ref{lemma 1}.
\begin{proof}[Proof of Lemma \ref{lemma 1}]
Due to Lemma \ref{lemma 2.2} and \cite{DR76} there are at most $n-2$ regular components in $D^b(\Lambda)$ when $\Lambda$ is a tame hereditary algebra. This is true for each type so this is true in all cases.
\end{proof}

\section{Lemma \ref{lemma 2}: uniform bound on transjective degree}\label{sec 3}
\indent To prove Lemma \ref{lemma 2} we need to rephrase an argument in \cite{BDP13} using degrees.
\begin{lemma}\label{lemma 3.1}
(\cite{BDP13},{Lemma 10.1}) Let $H$ be a representation-infinite connected hereditary algebra. Then there exists $N\geq 0$ such that for any $k\geq N$, for any projective $H$-module $P$, the $H$-modules $\tau^{-k}P$ and $\tau^{k+1}P[1]$ are sincere.
\end{lemma}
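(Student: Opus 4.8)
The plan is to pass to dimension vectors and use the asymptotics of the Coxeter transformation $\Phi$ acting on $K_0(H)\cong\mathbb{Z}^n$. Since there are only $n$ indecomposable projectives $P_1,\dots,P_n$ and $n$ simples, and since sincerity of a module means precisely that all $n$ coordinates of its dimension vector are positive, it is enough to find, for each $i$, an $N_i$ with $\tau^{-k}P_i$ sincere for all $k\ge N_i$, and then set $N=\max_i N_i$ (every projective is a direct sum of $P_i$'s and $\tau^{-k}$ commutes with direct sums). Each $P_i$ lies at the left end of the preprojective component, so $\tau^{-k}P_i$ is a module for all $k\ge0$ with $\dim\tau^{-k}P_i=\Phi^{-k}\dim P_i$. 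For the second family, note that in $\mathcal{D}^b(H)$ one has $\tau=\nu[-1]$ for $\nu$ the Nakayama functor, so $\tau^{k+1}P_i[1]=\tau^kI_i$, which for $k\ge1$ is a preinjective module; applying the duality $H\leftrightarrow H^{\mathrm{op}}$ (which interchanges $\tau^{-1}\leftrightarrow\tau$, projectives $\leftrightarrow$ injectives, and preprojective $\leftrightarrow$ preinjective components, and preserves sincerity), the assertion for this family follows from the assertion for $\tau^{-k}P$ applied to $H^{\mathrm{op}}$. Hence it suffices to treat $\tau^{-k}P_i$.

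In the tame (Euclidean) case, $\Phi$ fixes the minimal positive imaginary root $\delta$, which generates the radical of the Euler form and is a \emph{sincere} vector precisely because $Q$ is connected Euclidean. For any indecomposable $M$ in the preprojective component one has the classical estimate $\dim\tau^{-k}M=k\,c_M\,\delta+O(1)$, where $c_M=-\partial M>0$ is the (negated) defect of $M$. With $M=P_i$ this forces every coordinate of $\dim\tau^{-k}P_i$ to tend to $+\infty$, so it is eventually positive. As this is the only case arising for the algebras studied in the present paper, it already gives what we need here.

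In the wild case one replaces this linear estimate by exponential growth governed by the Perron--Frobenius theory of the Coxeter transformation: for a connected wild hereditary algebra the spectral radius $\lambda$ of $\Phi^{-1}$ satisfies $\lambda>1$, it is a simple eigenvalue, and the associated right and left eigenvectors $y^{+},z^{+}$ may be chosen strictly positive. Writing $\dim P_i=c_iy^{+}+(\text{terms of smaller spectral radius})$ with $c_i=\langle z^{+},\dim P_i\rangle/\langle z^{+},y^{+}\rangle$, which is positive because $z^{+}>0$ and $\dim P_i$ is nonzero and nonnegative, we get $\Phi^{-k}\dim P_i=c_i\lambda^{k}y^{+}+o(\lambda^{k})$, and again every coordinate eventually exceeds $0$. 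Taking the maximum of all thresholds so obtained --- over the finitely many $i$, and over both the $\tau^{-k}P_i$ family and the dual $\tau^kI_i$ family --- yields the uniform $N$.

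The single non-elementary input, and the step I expect to take the most care, is the positivity of the Perron eigenvectors of $\Phi^{-1}$ in the wild case: $\Phi$ itself is not a nonnegative matrix, so classical Perron--Frobenius does not apply directly, and one must invoke the structure theory of Coxeter transformations of connected wild hereditary algebras (for instance that some power of $\Phi^{-1}$ carries the nonnegative cone into its interior). In the tame case the corresponding ingredients --- sincerity of $\delta$ and the defect asymptotics --- are entirely standard, so for the tame hereditary algebras relevant to this paper the lemma presents no genuine difficulty.
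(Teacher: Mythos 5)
The paper does not actually prove this lemma: it is imported verbatim from \cite{BDP13} (Lemma 10.1) and used as a black box, so there is no internal proof to compare yours against. Judged on its own terms, your argument is correct and is essentially the standard proof via Coxeter asymptotics on $K_0(H)$. The reduction to the indecomposable projectives, the identification $\tau^{k+1}P_i[1]=\tau^k I_i$ (valid because $\tau=\nu[-1]$ on $\mathcal D^b(H)$ and a connected representation-infinite hereditary algebra has no projective-injective indecomposables, so $\tau^kI_i$ stays in the preinjective component of the module category), and the duality reducing the preinjective family to the preprojective family over $H^{\mathrm{op}}$ are all sound. The tame case is complete and elementary --- $\Phi$ fixes the sincere vector $\delta$, preprojectives have negative defect, so $\Phi^{-k}\dim P_i$ grows linearly in the $\delta$-direction and every coordinate is eventually positive --- and, as you observe, this is the only case the present paper ever uses. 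In the wild case your outline is right but rests on a genuine black box that you correctly isolate: the spectral theory of Coxeter transformations of connected wild quivers (Ringel, de la Pe\~{n}a--Takane), namely that $\lambda=\rho(\Phi)>1$ is a simple eigenvalue with strictly positive left and right eigenvectors. One point to make explicit there: your error term $o(\lambda^k)$ requires not just simplicity of $\lambda$ but that no other eigenvalue has modulus $\lambda$ (equivalently, that $\dim\tau^{-k}P_i/\lambda^k$ actually converges to $c_iy^{+}$); this strict dominance is part of the cited spectral theory, but as written your spectral decomposition quietly assumes it. With that input granted, the positivity $c_i=\langle z^{+},\dim P_i\rangle/\langle z^{+},y^{+}\rangle>0$ and the conclusion follow as you say, so your proposal supplies a legitimate proof of a statement the paper only cites.
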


For example, if $n=2$, then $N=1$.

\begin{lemma}\label{lemma 3.2}
(\cite{BDP13}) Let $\Lambda$ be a tame hereditary algebra and $M_1,M_2$ two transjective $\Lambda$-modules. If $\{M_1,M_2\}$ is silting-compatible, then $|deg(M_1)-deg(M_2)|\leq N$ 
\end{lemma}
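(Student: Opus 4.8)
The plan is to establish the contrapositive: if $M_1,M_2$ are transjective $\Lambda$-modules with $|\deg M_1-\deg M_2|>N$, then $\{M_1,M_2\}$ is silting-incompatible. One may assume $M_1\not\cong M_2$; since a tame hereditary algebra is connected, both modules lie in the unique transjective component, so each is either a postprojective module $\tau^{-a}P_i$ ($a\ge0$), of degree $-a$, or a preinjective module $\tau^bI_i=\tau^{b+1}P_i[1]$ ($b\ge0$), of degree $b+1$; in particular each $M_i$ is directing, hence rigid. Besides Lemma \ref{lemma 3.1}, I would use: (i) the principle underlying Lemma \ref{lemma 2.1}(1), whose proof makes no use of the tube and works for arbitrary rigid indecomposables $A,B$ of $\mathcal D^b(\Lambda)$ --- that $\text{Hom}(A,B)\neq0$ together with $\text{Ext}^1(B,A)\neq0$ forces $A[i]\oplus B[j]$ to be non-pre-silting for all $i,j$; (ii) Serre duality in $\mathcal D^b(\Lambda)$, $\text{Ext}^1(B,A)\cong D\,\text{Hom}(A,\tau B)$, which lets one detect the $\text{Ext}^1$ in (i) by a $\tau$-twisted Hom; and (iii) the identities $\text{Hom}(P_i,X)\cong e_iX$ and $\text{Hom}(X,I_i)\cong D(e_iX)$, by which a sincere module $X$ receives a nonzero map from every $P_i$ and maps nonzero into every $I_i$.

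The body of the argument is to split into three cases according to the postprojective/preinjective types and, in each, exhibit an ordered pair $(A,B)\in\{(M_1,M_2),(M_2,M_1)\}$ with $\text{Hom}(A,B)\neq0$ and $\text{Hom}(A,\tau B)\neq0$; by (ii) this makes $\text{Ext}^1(B,A)\neq0$, and then (i) gives silting-incompatibility. For \emph{both postprojective}, say $M_1=\tau^{-a_1}P_{i_1}$ and $M_2=\tau^{-a_2}P_{i_2}$ with $a_1<a_2$, the hypothesis reads $a_2-a_1\ge N+1$; using that $\tau$ is an autoequivalence one rewrites $\text{Hom}(M_1,M_2)=\text{Hom}(P_{i_1},\tau^{-(a_2-a_1)}P_{i_2})$ and $\text{Hom}(M_1,\tau M_2)=\text{Hom}(P_{i_1},\tau^{-(a_2-a_1-1)}P_{i_2})$, whose second arguments are genuine modules with $\tau$-exponent $\ge N$, hence sincere by Lemma \ref{lemma 3.1}, so both Homs are nonzero by (iii). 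For \emph{one of each}, say $M_1=\tau^{-a}P_i$ and $M_2=\tau^bI_j$, the hypothesis reads $a+b\ge N$ and one rewrites $\text{Hom}(M_1,M_2)=\text{Hom}(P_i,\tau^{a+b}I_j)$ and $\text{Hom}(M_1,\tau M_2)=\text{Hom}(P_i,\tau^{a+b+1}I_j)$, with $\tau^{a+b}I_j,\tau^{a+b+1}I_j$ sincere by Lemma \ref{lemma 3.1}. For \emph{both preinjective}, say $M_1=\tau^{b_1}I_{i_1}$ and $M_2=\tau^{b_2}I_{i_2}$ with $b_1<b_2$, the hypothesis reads $b_2-b_1\ge N+1$ and now $\text{Hom}(M_2,M_1)=\text{Hom}(\tau^{b_2-b_1}I_{i_2},I_{i_1})$ and $\text{Hom}(M_2,\tau M_1)=\text{Hom}(\tau^{b_2-b_1-1}I_{i_2},I_{i_1})$, with $\tau^{b_2-b_1}I_{i_2},\tau^{b_2-b_1-1}I_{i_2}$ sincere, so these Homs are nonzero by the second identity in (iii).

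I expect no conceptual obstacle --- Lemma \ref{lemma 3.1} does the real work --- but some care is needed with the bookkeeping: one must track the normal form $\tau^cP_i[\varepsilon]$, check in each rewriting that the $\tau$-shift of $P_i$ or $I_j$ appearing is a genuine object of $\text{mod}\,\Lambda$ so that the expression is an honest module Hom to which Lemma \ref{lemma 3.1} and the identities (iii) apply, and verify that all exponents that occur are $\ge N$. The only place the strict inequality in the statement is used is the $\tau$-twisted Hom (the $\text{Ext}^1$ term), whose $\tau$-exponent is one smaller than that of the plain Hom; hence one needs $|\deg M_1-\deg M_2|>N$, i.e. $\ge N+1$ since the degrees are integers, which is exactly the stated bound.
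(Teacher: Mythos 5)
Your proof is correct and follows essentially the same route as the paper: write the two transjective objects via $\tau$-powers of projectives/injectives, use Lemma \ref{lemma 3.1} to get sincerity of the relevant $\tau$-shifts, and combine a nonzero $\text{Hom}$ with a nonzero $\text{Ext}^1$ (obtained by Serre/AR duality) to rule out compatibility of every pair of shifts. The paper merely compresses your three cases into one by writing every transjective object uniformly as $\tau^kP_a[i]$ and letting $k$ range over all integers.
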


\begin{proof}
If $k-\ell>N$ we need to prove that $\tau^kP_a$ and $\tau^\ell P_b$ are silting-incompatible. If $i\leq j$ $\text{Ext}^{j-i+1}(\tau^lP_b[j],\tau^kP_a[i])=\text{Ext}^1(\tau^lP_b,\tau^kP_a)=\text{DHom}(\tau^{k-1}P_a,\tau^\ell P_b)=\text{DHom}(P_a,\tau^{\ell-k+1}P_b)\neq 0$ since $\tau^{\ell-k+1}P_b$ is a sincere preprojective module. If $i>j$, $\text{Ext}^{i-j}(\tau^kP_a[i],\tau^\ell P_b[j])=\text{Hom}(\tau^kP_a,\tau^\ell P_b)=\text{Hom}(P_a,\tau^{\ell-k} P_b)\neq 0$ since $\tau^{\ell-k}P_b$ is also sincere. Hence $\tau^kP_a$ and $\tau^\ell P_b$ are silting-incompatible. Exchange the objects if $k-\ell<-N$. Hence the lemma has been proven.
\end{proof}
\indent We now prove Lemma \ref{lemma 2} following a modified version of the argument in \cite{BDP13}.
\begin{proof}[Proof of Lemma \ref{lemma 2}]
\indent We will prove that there is a lower bound on the minimal transjective degree of any silting object that can appear in an $m$-maximal green sequence. Let $\mathcal R_j$ denote the set of objects $R[j]$ where $R$ is regular and let $\mathcal P_j$ denote the set of all objects $\tau^kP_i[j]$. Given a silting object $T$, let $n_j$ be the number of components of $T$ in $\mathcal P_j\cup \mathcal R_{j-1}$. Then we claim:
\begin{equation}\label{eq: lower bound for deg T}
	min\,deg\,T\ge -\sum_{j=0}^m n_j(m-j)N=-nmN+\sum_{j=0}^m jn_jN
\end{equation}

The proof will be by induction on the number of green mutations from $T$ to $\Lambda[m]$. If this number is zero then $T=\Lambda[m]$ and both sides of the inequality are zero. So, the inequality holds in this case.

Suppose that \eqref{eq: lower bound for deg T} holds for $T$ and $T'$ is obtained from $T$ by one red mutation, say $T'=\mu_iT$. So, $T,T'$ differ in their $i$th components $T_i,T_i'$. Then we will show that the inequality also holds for $T'$. 

By Lemma \ref{lemma 1} $T$ has at least 2 transjective components. So, $T/T_i=T'/T_i'$ has a transjective component and $min\,deg(T/T_i)\ge min\, deg\,T$. By Lemma \ref{lemma 3.2},  \[
min\,deg\,T'_i\ge min\, deg(T'/T_i')-N\ge min\,deg\,T-N.
\]
However, the only way that $min\,deg\,T'$ can be less than $min\,deg\,T$ is if $T_i'\in \mathcal P_k$ and $T_i\in \mathcal P_j\cup \mathcal R_{j-1}$ for some $j>k$. But then the RHS of \eqref{eq: lower bound for deg T} decreases by at least $N$. So, the inequality will hold for $T'$ in that case. Finally, if $min\,deg\,T'\ge min\, deg\,T$ then the inequality clearly holds for $T'$ since the RHS of \eqref{eq: lower bound for deg T} does not increase under a red mutation.

Thus \eqref{eq: lower bound for deg T} holds for any silting object $T$ in an $m$-maximal green sequence. In particular each transjective component of $T$ has degree at least $-nmN$. 

Similarly, silting objects in $m$-maximal green sequences can not have maximal transjective degree higher than $nmN$ or it can not start from $\Lambda$.
\end{proof}

\section{$m$-red sequences}\label{sec 4}
\indent Using the same method we can prove a stronger result.
\begin{definition}
Let $\Lambda$ be a finite dimensional hereditary algebra. A mutation sequence in $D^b(\Lambda)$ is \textit{$m$-red} if it contains $m$ backward mutations with the rest being forward mutations and it is \textit{$m$-green} if it contains exactly $m$ forward mutations.
\end{definition}

\indent A $0$-red sequence is just a green one. A $0$-green sequence is just a red one.
\begin{theorem}\label{thm: finite number of m-red}\label{thm 4.2}
If $\Lambda$ is a hereditary algebra of finite or tame type and $T_1$, $T_2$ are silting objects of $D^b(\Lambda)$ then there are only finitely many $m$-red and $m$-green mutation sequences from $T_1$ to $T_2$ for any $m$.
\end{theorem}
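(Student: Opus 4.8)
The plan is to reduce Theorem \ref{thm 4.2} to the same finiteness-of-summands statement that powered Theorem \ref{thm 1.2}, namely: only finitely many indecomposable objects of $\mathcal D^b(\Lambda)$ can occur as summands of the silting objects appearing in any $m$-red or $m$-green sequence from $T_1$ to $T_2$. Once that is established, the finiteness of the sequences themselves follows exactly as in the $m$-maximal green case: there are only finitely many silting objects built from a fixed finite pool of indecomposables, and each such silting object has only finitely many mutations, so the directed graph of possible sequences is finite and any path of bounded ``total length'' — controlled by $m$ plus the length of a green/red path between the two — lives in a finite set. For the finite-type case there is nothing to prove, since $\mathcal D^b(\Lambda)$ has only finitely many indecomposables; so the content is entirely in the tame case, where as before we must separately bound the regular summands and the transjective summands.

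For the regular summands: Lemma \ref{lemma 1} already tells us every silting object has at most $n-2$ regular summands, and a regular summand $R[j]$ appearing in a silting object between $T_1$ and $T_2$ has its shift $j$ confined to a bounded window (because $T_1,T_2$ are fixed, and each mutation changes the relevant numerical data in a controlled way; concretely one tracks, as in Lemma \ref{lemma 2}, that the shifts cannot escape an interval determined by $T_1$, $T_2$ and $m$). Since within each bounded shift there are only finitely many rigid regular indecomposables (the nonhomogeneous tubes contribute finitely many, the homogeneous tubes none), the regular pool is finite. The transjective part is where the real work is, and the idea is to re-run the proof of Lemma \ref{lemma 2} in this more general setting. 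In that proof the induction was on the number of green mutations remaining to reach $\Lambda[m]$, and a red mutation was shown to preserve inequality \eqref{eq: lower bound for deg T} because the RHS can only decrease under a red mutation while $\min\,deg$ drops by at most $N$ relative to that decrease. For an $m$-red (or $m$-green) sequence between arbitrary $T_1$ and $T_2$, the same bookkeeping works: starting from the fixed endpoint one still has at most $m$ ``wrong-direction'' mutations to absorb, each costing at most $N$ in transjective degree by Lemma \ref{lemma 3.2}, while forward mutations toward the endpoint cannot hurt the bound. The key local fact — that any single mutation changes the minimal (resp. maximal) transjective degree by at most $N$ when the object being swapped out is transjective — is supplied by Lemmas \ref{lemma 1} and \ref{lemma 3.2} exactly as before.

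So the proof would proceed: (i) dispatch the finite-type case trivially; (ii) in the tame case, define the potential function on silting objects analogous to the RHS of \eqref{eq: lower bound for deg T}, now measured relative to the fixed endpoint $T_2$ instead of $\Lambda[m]$, and weighted so that each of the (at most $m$) backward-relative-to-$T_2$ steps contributes a bounded penalty; (iii) induct along the sequence from the $T_2$ end, showing the potential bounds $\min\,deg\,T$ from below and $\max\,deg\,T$ from above, using Lemma \ref{lemma 1} to guarantee a transjective component survives every mutation and Lemma \ref{lemma 3.2} to bound the degree jump; (iv) conclude a uniform two-sided bound on transjective degrees, hence a finite transjective pool; (v) combine with the finite regular pool to get finitely many silting objects in play, hence finitely many sequences. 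The main obstacle I anticipate is the careful combinatorial setup in step (ii)–(iii): in the original argument the sequence had a canonical direction (all but finitely many mutations green, terminating at $\Lambda[m]$), whereas here one must pin down the right monovariant when the sequence connects two arbitrary silting objects and mixes $m$ backward with arbitrarily many forward mutations — one has to choose which endpoint to induct from so that the ``bad'' mutations are the bounded-in-number ones, and verify that an $m$-green sequence read backwards is an $m$-red sequence of the reversed mutation so that a single argument covers both cases.
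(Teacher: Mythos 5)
Your treatment of the pool of indecomposable summands is essentially the paper's: reduce $m$-green to $m$-red by reversing the sequence, confine all summands to a window of shifts determined by the endpoints (between $\Lambda[i-m]$ and $\Lambda[j+m]$ if $T_1,T_2$ live in degrees $i\le k<j$), observe that only finitely many regular rigid indecomposables lie in that window, and re-run the potential-function argument of Lemma \ref{lemma 2} from the fixed endpoint to get a two-sided bound on transjective degrees. That part is sound. The genuine gap is in your opening reduction: you assert that once only finitely many silting objects can appear, the finiteness of the sequences themselves ``follows exactly as in the $m$-maximal green case,'' with total length ``controlled by $m$ plus the length of a green/red path between the two.'' This does not follow. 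A finite directed graph with cycles admits infinitely many paths, and an $m$-red sequence genuinely can revisit a silting object (alternate $T\to T'\to T\to\cdots$, spending one red mutation per return); the number of green mutations in an $m$-red sequence is not a priori bounded by anything of the form you describe. The missing ingredient, supplied in the paper as Lemma \ref{lemma 4.3}, is that an $m$-red sequence decomposes into exactly $m+1$ maximal green arms, each of which --- being a green sequence --- passes through any given silting object at most once; hence the whole sequence visits each silting object at most $m+1$ times and its length is at most $(m+1)$ times the (finite) number of admissible silting objects. Without this input your step from ``finitely many silting objects'' to ``finitely many sequences'' is an unproved length bound, which is precisely the point at issue.

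A smaller slip: the finite-type case is not trivial because ``$\mathcal D^b(\Lambda)$ has only finitely many indecomposables'' --- it has infinitely many, namely $M[k]$ for all integers $k$. You need the same bounded-shift-window observation there (after which only finitely many indecomposables survive), and you still need the visit-counting lemma above, since the revisiting problem is independent of representation type.
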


The main purpose of this theorem is to show, assuming the ``$m$-Rotation Lemma'', that there are only finitely many $m$-maximal green sequences for any cluster-tilted algebra $A$ of tame type. The idea is that $A$ is the endomorphism ring of a silting object $T$ for a hereditary algebra $\Lambda$ and an $m$-cluster version of the Rotation Lemma from \cite{BHIT15} is expected to state that $m$-maximal green sequences for $A$ are in bijection with 0-red, i.e., green sequences from $T$ to $T[m]$ in $\mathcal D^b(\Lambda)$. The idea that this holds comes from \cite{KQ}.

\indent Note that an $m$-green sequence from $T_1$ to $T_2$ is equivalent to an $m$-red sequence from $T_2$ to $T_1$. So, we only need to prove that part of the statement about $m$-red sequences. For this, we first need to prove the following lemma which is a generalization of Lemma 4.4.2 in \cite{BHIT15}.
\begin{lemma}\label{lemma 4.3}
Any $m$-red sequence from $T_1$ to $T_2$ can go through any silting object at most $m+1$ times.
\end{lemma}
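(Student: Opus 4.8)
The plan is to argue by a monovariant/counting argument on how many forward (green) versus backward (red) mutations can occur between two visits to the same silting object. Suppose an $m$-red sequence $T_1 = S_0, S_1, \dots, S_L = T_2$ passes through a fixed silting object $U$ at steps $i_0 < i_1 < \dots < i_r$, so that $S_{i_0} = S_{i_1} = \dots = S_{i_r} = U$; I want to show $r \le m$, i.e. at most $m+1$ visits. The key observation, exactly as in Lemma 4.4.2 of \cite{BHIT15}, is that for each consecutive pair of visits the sub-sequence from $S_{i_t}$ to $S_{i_{t+1}}$ is a mutation sequence from $U$ back to $U$ which is \emph{not} constant, hence it must contain at least one backward mutation. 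Indeed, a non-trivial mutation sequence from a silting object to itself cannot consist only of forward mutations: forward mutations strictly increase a suitable partial order on silting objects (or, in derived-category language, strictly move the object ``upward'' in the sense that green mutations are compatible with a strict monovariant such as the one implicitly used in the proof of Lemma \ref{lemma 2}), so a loop built from forward mutations alone is impossible.

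Concretely, I would first recall or set up the partial order on silting objects of $\mathcal D^b(\Lambda)$ under which $T \ge T'$ whenever $T'$ is obtained from $T$ by a forward Iyama–Yoshino mutation; this is the partial order already used implicitly when speaking of ``forward/green'' versus ``backward/red'' mutations, and it is antisymmetric. Then: along any loop from $U$ to $U$ the net change in this partial order is trivial, but a forward mutation strictly increases it and a backward mutation strictly decreases it, so each loop segment $S_{i_t} \to \cdots \to S_{i_{t+1}}$ (which is non-empty, since the full sequence is finite and the visits are at distinct steps with genuine mutations in between) must contain at least one backward mutation. Summing over $t = 0, 1, \dots, r-1$, the whole sequence contains at least $r$ backward mutations. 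Since an $m$-red sequence has exactly $m$ backward mutations, $r \le m$, so $U$ is visited at most $m+1$ times.

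The one subtlety — and the step I expect to be the main obstacle — is justifying the claim that \emph{a non-constant forward-only mutation sequence from $U$ to $U$ cannot exist}, i.e. that the relevant relation on silting objects generated by forward mutation is a genuine (strict) partial order with no nontrivial cycles. Over a hereditary algebra this is standard: forward mutations correspond to passing to a ``smaller'' silting object in the silting order of Aihara–Iyama, and this order is antisymmetric, so there are no oriented cycles of forward mutations; one can cite \cite{IY06} together with the definition of forward mutation used here. Once that is in hand, the counting argument above is immediate and finishes the lemma.
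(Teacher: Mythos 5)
Your proof is correct and rests on the same key fact as the paper's: a non-constant sequence of forward mutations cannot return to the same silting object, because green mutation strictly decreases the silting order (the paper cites \cite{BY13} and \cite{KY12} for this). The paper packages the count dually --- the $m$ red mutations split the sequence into $m+1$ maximal green arms, each of which visits a given silting object at most once --- whereas you count at least one red mutation in each of the $r$ segments between consecutive visits; these are equivalent bookkeepings of the same argument.
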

\begin{proof}
It is clear from the definition of mutations that a green sequence can go through any silting object at most once. (See \cite{BY13} and \cite{KY12} for more details.) Let's define a \textit{maximal green arm} of a mutation sequence as a maximal subsequence $X_1,X_2,\cdots,X_k$ of silting objects in the mutation sequence so that each mutation $X_i\to X_{i+1}$ is green. Since there are $r$ red mutations, the entire mutation sequence is a disjoint union of exactly $m+1$ maximal green arms, some of which can have length one. Since each maximal green arm can go through a silting object at most once, the union of these arms can pass through the same silting object at most $m+1$ times.
\end{proof}
\indent By repeating the same mutation $T\leftrightarrow T'$ $2m$ times we see that the bounds established in the lemma are optimal. Now we can prove the theorem. Note that Lemma \ref{lemma 4.3} above implies that, in the tame case, if we can prove that for any $m$ there are only finitely many rigid objects that can appear as summands of silting objects in $m$-red sequences, the theorem will be proven. 
\begin{proof}[Proof of Theorem \ref{thm 4.2}]
\indent As we said above we will only prove the part about $m$-red sequences. Assume that all indecomposable summands of $T_1$ and $T_2$ are in $(mod\text-\Lambda)[k]$ for $i\le k< j$. Then, all indecomposable summands that appear in $m$-red sequences from $T_1$ to $T_2$ have to be in $(mod\text-\Lambda)[k]$ for $i-m\le k< j+m$, i.e., between $\Lambda[i-m]$ and $\Lambda[j+m]$.\\ 
\indent For $\Lambda$ of finite type, there are only finitely many indecomposable objects in this range and hence only finitely many silting objects can exist on an $m$-red sequence. Due to Lemma \ref{lemma 4.3} there are finitely many $m$-red sequences. \\
\indent From now on we assume that $\Lambda$ is tame. There are only finitely many regular rigid indecomposable objects between $\Lambda[i-m]$ and $\Lambda[j+m]$ so the problem has been reduced to proving that only finitely many transjective indecomposable components can appear in silting objects in $m$-red sequences.\\
\indent Let the minimal degree of $T_2$ be $L$. Note that a red mutation can increase the minimal degree of a silting object by at most $N$. Use an argument similar to that one used to prove Theorem \ref{thm 1.2} we can prove that no indecomposable transjective object with degree less than $L-nN(2m+j-i)-mN$ can appear in any $m$-red sequences from $T_1$ to $T_2$. Similarly let the maximal degree of $T_1$ be $H$. No indecomposable transjective object with degree greater than $H+nN(2m+j-i)+mN$ can appear in any $m$-red sequence from $T_1$ to $T_2$. Hence, only finitely many indecomposable transjective objects can appear in any $m$-red sequence from $T_1$ to $T_2$ and the theorem is proven.
\end{proof}
\indent The bounds on transjective degrees in the proofs of Theorems \ref{thm 1.2} and \ref{thm: finite number of m-red} above are very crude. In the future we will try to find better bounds. We also hope to prove the $m$-Rotation Lemma.
\bibliographystyle{amsplain}

\end{document}